\newtheorem{thm}{Theorem}[section]
\newtheorem{lemma}[thm]{Lemma}
\newtheorem{cor}[thm]{Corollary}
\numberwithin{equation}{section}
\theoremstyle{definition}
\newtheorem{ex}[thm]{Example}
\newtheorem*{remark}{Remark}
\newcommand{\Cay}{\mathrm{Cay}}
\newcommand{\ca}{\mathcal{A}}
\newcommand{\bbZ}{\mathbb{Z}}
\newcommand{\la}{\langle}
\newcommand{\ra}{\rangle}
\begin{document}
\title[Nathanson's Heights and the CSS Conjecture for Cayley Graphs]
{Nathanson's Heights and the CSS Conjecture \\ for Cayley Graphs}
\author{Yotsanan Meemark and Chaiwat Pinthubthaworn}
\address{Yotsanan Meemark\\ Department of Mathematics\\ Faculty of Science
\\ Chulalongkorn University\\ Bangkok, 10330 THAILAND}
\email{\tt yotsanan.m@chula.ac.th}

\address{Chaiwat Pinthubthaworn\\ Department of Mathematics\\ Faculty of Science
\\ Chulalongkorn University\\ Bangkok, 10330 THAILAND}
\email{\tt kruchaiwat@hotmail.com}

\keywords{Cayley graphs; CSS conjecture; Nathanson's heights.}

\subjclass[2000]{Primary: 05C25, 11A07}

\begin{abstract}
Let $G$ be a finite directed graph, $\beta(G)$ the minimum size of a subset $X$ of edges such that the graph $G' = (V,E \smallsetminus X)$ is directed acyclic and $\gamma(G)$  the number of pairs of nonadjacent vertices in the undirected graph obtained from $G$ by replacing each directed edge with an undirected edge. Chudnovsky, Seymour and Sullivan  \cite{CSS07} proved that if $G$ is triangle-free, then $\beta(G) \leq \gamma(G)$. They conjectured a sharper bound (so called the ``CSS conjecture") that $\beta(G) \leq \dfrac{\gamma(G)}{2}$.  Nathanson and Sullivan verified this conjecture for the directed Cayley graph $\Cay(\bbZ/N\bbZ, E_A)$ whose vertex set is the additive group $\bbZ/N\bbZ$ and whose edge set $E_A$ is determined by $E_A = \left\{(x,x+a) : x \in \bbZ/N\bbZ, a \in A\right\}$ when $N$ is prime in \cite{NS07} by introducing ``height". In this work, we extend the definition of height and the proof of CSS conjecture for $\Cay(\bbZ/N\bbZ, E_A)$ to any positive integer $N$.
\end{abstract}

\maketitle

\section{Introduction}

A {\it finite directed graph} $G=(V,E)$ consists of two finite sets, the set $V=V(G)$ of {\it vertices} of $G$ and the set $E = E(G) \subseteq V \times V$ of {\it edges} of $G$. Let $v$ and $v'$ be distinct vertices of the finite directed graph $G$. A {\it directed path of length $l$} in $G$ from $v$ to $v'$ is a sequence of $l$ edges $\{(v_{i-1}, v_{i})\}_{i=1}^l$ such that $v=v_0$ and $v'=v_l$. A {\it directed cycle of length~$l$} in $G$ is a sequence of $l$ edges $\{(v_{i-1}, v_{i})\}_{i=1}^l$ such that $v_0 = v_l$. A {\it loop}, a {\it digon} and a {\it triangle} are directed cycle of length $1$, $2$ and $3$, respectively. A {\it triangle free} graph is a graph with no loops, digons, or triangles. A directed graph is called {\it acyclic} if it has no directed cycles. 

Let $\beta(G)$ be the minimum size of a subset $X$ of edges such that the graph $G' = (V,E \smallsetminus X)$ is directed acyclic, and let $\gamma(G)$ be the number of pairs of nonadjacent vertices in the undirected graph obtained from $G$ by replacing each directed edge with an undirected edge.  Chudnovsky, Seymour and Sullivan \cite{CSS07} proved that if $G$ is a triangle-free digraph, then $\beta(G) \leq \gamma(G)$.
They conjectured a sharper bound (so called the ``CSS conjecture") that if $G$ is a triangle-free digraph, then $\beta(G) \leq \dfrac{\gamma(G)}{2}$. 

Let $N$ be a positive integer and $A$ a nonempty subset of $\bbZ/N\bbZ \smallsetminus \{0\}$ of cardinality $d \le N$. Consider the directed Cayley graph $G = \Cay(\bbZ/N\bbZ, E_A)$ whose vertex set is the additive group $\bbZ/N\bbZ$ and whose edge set $E_A$ is determined by
\[
E_A = \left\{(x,x+a) : x \in \bbZ/N\bbZ, a \in A\right\}.
\]
Assume that $G$ is triangle free. Then $G$ has neither loops nor digons, so the number of pairs of adjacent vertices is the same as the number of directed edges, which is $dN$. Thus the number of pairs of nonadjacent vertices is
\begin{equation} \label{gammaG}
\gamma(G) = \binom{N}{2} - dN = \dfrac{N(N-1-2d)}{2}. 
\end{equation}
In this case, the inequality in the CSS conjecture becomes
\[
\beta(G) \le \dfrac{\gamma(G)}{2} = \dfrac{N(N-1-2d)}{4}.
\]
By introducing the term ``height in finite projective space", Nathanson and Sullivan verified this conjecture when $N$ is prime in \cite{NS07}. Later, the height on the finite projective line was studied extensively in \cite{N08}.

Using the ``height" idea together with some elementary number theory facts involving the unit group of $\bbZ/N\bbZ$ and its cardinality, we prove the CSS conjecture when $N$ is any positive integer expanding Nathanson and Sullivan's results. The detail of our work is divided into two sections. Section 2 presents the definition and bound of the height defined for $\bbZ/N\bbZ$. The final section talks about the CSS conjecture and shows how to relate the height to it.

This work grows out of the second author's master thesis at Chulalongkorn university written under the direction of the first author to which the second author expresses his gratitude. 

\bigskip

\section{Heights}

Let $N$ and $d$ be positive integers. 
We define an equivalence relation $\sim$ on the set of nonzero $d$-tuple $(\bbZ/N\bbZ)^d\smallsetminus {(0,\dots,0)}$ by
\[
(a_1, a_2, \dots, a_d) \sim (b_1, b_2, \dots, b_d) 
\Leftrightarrow (b_1, b_2, \dots, b_d) = \lambda(a_1, a_2, \dots, a_d)  
\]
for some $\lambda \in (\bbZ/N\bbZ)^\times$. Here $(\bbZ/N\bbZ)^\times$ stands for the unit group of $\bbZ/N\bbZ$ and we use $(\bbZ/N\bbZ)^*$ for the set of nonzero element in $\bbZ/N\bbZ$. Observe that $(\bbZ/N\bbZ)^\times = (\bbZ/N\bbZ)^*$ if and only if $N$ is a prime. Also, $|(\bbZ/N\bbZ)^\times| = \phi(N)$, the {\it Euler $\phi$-function}. Write $(a \mod{N})$ for the least nonnegative integer in the congruence class $a \in \bbZ/N\bbZ$. We first compute

\begin{lemma} \label{sumka}
For $a \in (\bbZ/N\bbZ)^*$, $$ \sum_{k \in (\bbZ/N\bbZ)^\times}(ka \mod{N}) = \frac{N\phi(N)}{2}.$$
\end{lemma}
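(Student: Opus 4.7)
The plan is to use a pairing argument driven by the involution $k \mapsto -k$ on the unit group $(\bbZ/N\bbZ)^\times$. Since $-1 \in (\bbZ/N\bbZ)^\times$, this map is a bijection from the set of units to itself, and I will use it to partition the index set of the sum into two-element orbits whose combined contribution is always $N$.

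First I would verify that, for $N \geq 3$, the involution $k \mapsto -k$ acts freely on $(\bbZ/N\bbZ)^\times$: the equation $k \equiv -k \pmod{N}$ forces $2k \equiv 0 \pmod{N}$, and multiplying by $k^{-1}$ gives $2 \equiv 0 \pmod{N}$, contradicting $N \geq 3$. Hence the units decompose into exactly $\phi(N)/2$ orbits of the form $\{k,-k\}$.

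Next I would compute the contribution of a single orbit to the sum. Because $k$ is a unit and $a \neq 0$, the product $ka$ is nonzero in $\bbZ/N\bbZ$ (otherwise multiplying by $k^{-1}$ would give $a = 0$), so $0 < (ka \bmod N) < N$. It follows that $(-ka \bmod N) = N - (ka \bmod N)$, and therefore
\[
(ka \bmod N) + (-ka \bmod N) = N.
\]
Summing over all $\phi(N)/2$ orbits yields the claimed total $N\phi(N)/2$. The edge cases $N=1$ and $N=2$ are handled directly: $N=1$ makes $(\bbZ/N\bbZ)^*$ empty and the statement vacuous, while for $N=2$ the only choice is $a=k=1$, and both sides equal $1$.

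I do not anticipate any serious obstacle here; the entire argument rests on two elementary facts, namely that $-1$ is always a unit and that a unit times a nonzero element is nonzero. The only subtlety worth flagging is the need to check that $ka \not\equiv 0 \pmod{N}$, which is what allows the clean identity $(-ka \bmod N) = N - (ka \bmod N)$ and thereby avoids any correction term for $N$ even.
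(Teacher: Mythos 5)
Your proof is correct and follows essentially the same route as the paper: the pairing $k \leftrightarrow -k$ is exactly the paper's pairing $k \leftrightarrow N-k$ on the unit group, with each pair contributing $N$. Your explicit check that $ka \not\equiv 0 \pmod N$ (justifying $(-ka \bmod N) = N - (ka \bmod N)$) is a point the paper leaves implicit, but it is the same argument.
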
 
\begin{proof}
Let $a \in (\bbZ/N\bbZ)^*$. If $N = 2$, then $(a \mod{2}) = 1 = \dfrac{2\phi(2)}{2}$. Next we assume that  $N > 2$.
It is clear that  $k \in (\bbZ/N\bbZ)^\times \Leftrightarrow N-k \in (\bbZ/N\bbZ)^\times$ for all $k \in (\bbZ/N\bbZ)^*$.
Since $N > 2$, $k \neq N-k$ for every $k \in (\bbZ/N\bbZ)^\times$. Then 
\[
(\bbZ/N\bbZ)^\times = \left\{ k,N-k : k \in (\bbZ/N\bbZ)^\times \; \text{and} \; k < \frac{N}{2} \right\}
\]
and so $\phi(N)$ is even. Note that 
\[
((N-k)a \mod{N}) = ((Na-ka)\mod{N}) = N-(ka\mod{N})
\]
for all $k \in (\bbZ/N\bbZ)^\times$. Thus
\begin{align*}
\sum_{k \in (\bbZ/N\bbZ)^\times}  (ka \mod{N}) 
&= \sum_{\substack{k \in (\bbZ/N\bbZ)^\times, \\ k < N/2}} [(ka \mod{N})+((N-k)a\mod{N})] \\
&= \sum_{\substack{k \in (\bbZ/N\bbZ)^\times, \\ k < N/2}} [(ka \mod{N})+(N-(ka\mod{N}))] \\
&= \sum_{\substack{k \in (\bbZ/N\bbZ)^\times, \\ k < N/2}} N = \frac{N\phi(N)}{2}.
\end{align*}
Hence we have the lemma.
\end{proof} 

We denote the equivalence class of the point $(a_1,a_2,\dots,a_d)$ by $\left\langle a_1,a_2,\dots,a_d \right\rangle$ and the set of all equivalence classes by $\mathbf{P}^{d-1}(\bbZ/N\bbZ)$. The  {\it{height}} of the class ${\bf{a}} = \left\langle a_1,a_2,\dots,a_d \right\rangle \in \mathbf{P}^{d-1}(\bbZ/N\bbZ)$ is given by
\[
h_N ({\bf{a}}) = \min \left\{  \sum_{i=1}^{d}(k a_i \mod N) : k \in (\bbZ/N\bbZ)^\times  \right\} .
\]
Since ${\bf{a}} \neq {\bf{0}}$, there exists $a_j \in (\bbZ/N\bbZ)^*$ such that $(k{a_j} \mod{N}) > 0$ for every $k \in (\bbZ/N\bbZ)^\times$, so $h_N: \mathbf{P}^{d-1}(\bbZ/N\bbZ) \rightarrow \mathbb{Z}^+$.
We use $d^*({\bf{a}})$ to denote the number of nonzero components of  ${\bf{a}} = \left\langle a_1,\dots,a_d \right\rangle \in \mathbf{P}^{d-1}(\bbZ/N\bbZ)$, that is, the number of $a_i \neq 0$, and we define $$d^*({\ca}) = \text{max}\left\{ d^*({\bf{a}}) : {\bf{a}} \in \ca \right\}$$ for  $\ca \subseteq \mathbf{P}^{d-1}(\bbZ/N\bbZ)$. Clearly, $h_N ({\bf{a}}) \leq d^*({\bf{a}})(N-1)$ for all ${\bf{a}} \in \mathbf{P}^{d-1}(\bbZ/N\bbZ)$. For any nonempty finite subset $A$ of $\bbZ^+$ with $\left|A\right| = m$, we note that $\min A \leq \dfrac{1}{m}\sum_{a \in A} a$. By Lemma \ref{sumka}, we have
\begin{align*}
h_N({\bf{a}}) &= \text{min} \left\{ \sum_{i=1}^{d}(ka_i \mod{N}) : k \in (\bbZ/N\bbZ)^\times  \right\}\\
&\leq \frac{1}{\phi(N)}\left(\sum_{k \in (\bbZ/N\bbZ)^\times}\left(\sum_{i=1}^{d}(ka_i \mod{N})\right)\right)\\
&= \frac{1}{\phi(N)}\left(\sum_{i=1}^{d}\left(\sum_{k \in (\bbZ/N\bbZ)^\times}(ka_i \mod{N})\right)\right)\\
&= \frac{1}{\phi(N)}\left(d^*({\bf{a}})\frac{N\phi(N)}{2}\right) = \frac{d^*({\bf{a}})N}{2}.
\end{align*}
Since the heights are the positive integers, $h_N ({\bf{a}}) \leq \left\lfloor \dfrac{d^*({\bf{a}})N}{2}\right\rfloor$.
Hence we get a better bound for $h_N ({\bf{a}})$. We summarize the above computation with its corollary as follows.

\begin{lemma} \label{ubhn}
For ${\bf{a}} \in \mathbf{P}^{d-1}(\bbZ/N\bbZ), h_N ({\bf{a}}) \leq \left\lfloor \dfrac{d^*({\bf{a}})N}{2}\right\rfloor$.
\end{lemma}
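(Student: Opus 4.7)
The plan is to combine an averaging inequality with Lemma \ref{sumka} and conclude by integrality. More precisely, for any nonempty finite multiset of positive reals, the minimum is bounded above by the arithmetic mean. I would apply this with the multiset
\[
\left\{\sum_{i=1}^{d}(ka_i \bmod N) : k \in (\bbZ/N\bbZ)^\times\right\},
\]
whose cardinality is $\phi(N)$. Since the height is defined as the minimum of this multiset, this yields
\[
h_N(\mathbf{a}) \;\le\; \frac{1}{\phi(N)}\sum_{k \in (\bbZ/N\bbZ)^\times}\sum_{i=1}^{d}(ka_i \bmod N).
\]

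Next I would swap the order of summation. For each index $i$ with $a_i = 0$, the inner contribution is $0$; for each index $i$ with $a_i \in (\bbZ/N\bbZ)^*$, Lemma \ref{sumka} gives $\sum_{k \in (\bbZ/N\bbZ)^\times}(ka_i \bmod N) = N\phi(N)/2$. Hence only the $d^*(\mathbf{a})$ nonzero components contribute, and the double sum equals $d^*(\mathbf{a}) \cdot N\phi(N)/2$. Dividing by $\phi(N)$ produces the upper bound $h_N(\mathbf{a}) \le d^*(\mathbf{a})N/2$. Because $h_N(\mathbf{a})$ is a positive integer, this bound may be replaced by its floor, which finishes the argument.

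There is no serious obstacle here; the only point that requires a bit of care is correctly identifying $d^*(\mathbf{a})$ (rather than $d$) as the coefficient in the final estimate, which is forced by the fact that the zero coordinates contribute nothing to the inner sum. The integrality step at the end — passing from $d^*(\mathbf{a})N/2$ to $\lfloor d^*(\mathbf{a})N/2 \rfloor$ — is immediate and gives the sharper bound stated in the lemma.
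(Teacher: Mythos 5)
Your argument is correct and is essentially identical to the paper's own proof: bound the minimum by the average over $k \in (\bbZ/N\bbZ)^\times$, interchange the two sums, apply Lemma \ref{sumka} to each of the $d^*(\mathbf{a})$ nonzero coordinates, and finish by integrality of $h_N(\mathbf{a})$. Nothing is missing.
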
 

\begin{cor} \label{ubhnc}
{\rm (i)} For $d \geq 1$ and ${\bf{a}} \in \mathbf{P}^{d-1}(\bbZ/2\bbZ)$, $h_2 ({\bf{a}}) = d^*({\bf{a}})$. \\
{\rm (ii)} For $N \geq 2$ and ${\bf{a}} = \left\langle a \right\rangle \in \mathbf{P}^{0}(\bbZ/N\bbZ), h_N({\bf{a}}) \leq \left\lfloor \dfrac{N}{2} \right\rfloor$. In particular, if $a \in (\bbZ/N\bbZ)^\times$, then $h_N({\bf{a}}) = \min \left\{ka \mod{N} : k \in (\bbZ/N\bbZ)^\times \right\} = \min \left\{ (k \mod{N})  : k \in (\bbZ/N\bbZ)^\times \right\}= 1$. 
\end{cor}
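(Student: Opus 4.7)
The plan is to observe that both parts follow almost immediately from the definitions together with Lemma \ref{ubhn}, with essentially no new computation required.

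For part (i), I would note that $(\bbZ/2\bbZ)^\times = \{1\}$, so the minimum in the definition of $h_N$ collapses to a single term. Then
\[
h_2(\mathbf{a}) = \sum_{i=1}^d (a_i \bmod 2) = d^*(\mathbf{a}),
\]
since each summand is either $0$ or $1$, contributing exactly when $a_i \neq 0$. (One could alternatively invoke Lemma \ref{ubhn} for the upper bound $h_2(\mathbf{a}) \le d^*(\mathbf{a})$ and use the trivial lower bound from the single-element minimum.)

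For part (ii), I would take $d = 1$ in Lemma \ref{ubhn}. Since $\mathbf{a} = \langle a \rangle \neq \mathbf{0}$, we have $a \neq 0$, so $d^*(\mathbf{a}) = 1$, giving $h_N(\mathbf{a}) \le \lfloor N/2 \rfloor$ directly.

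For the ``in particular'' clause, when $a \in (\bbZ/N\bbZ)^\times$, multiplication by $a$ is a bijection of $(\bbZ/N\bbZ)^\times$ onto itself, so
\[
\{ ka \bmod N : k \in (\bbZ/N\bbZ)^\times \} \;=\; \{ k \bmod N : k \in (\bbZ/N\bbZ)^\times \}.
\]
Since $1$ lies in $(\bbZ/N\bbZ)^\times$ and is the least positive integer, the minimum of this set is $1$, giving $h_N(\mathbf{a}) = 1$. There is no real obstacle here; the only point requiring a sentence of justification is the set equality above, which is just the orbit description of the unit group acting on itself by left multiplication.
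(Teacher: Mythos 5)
Your proposal is correct and follows exactly the route the paper intends: the corollary is stated without a separate proof precisely because it is immediate from Lemma \ref{ubhn} and the definition of $h_N$, which is what you use — $(\bbZ/2\bbZ)^\times=\{1\}$ collapses the minimum in (i), Lemma \ref{ubhn} with $d^*(\mathbf{a})=1$ gives (ii), and the unit $a$ permuting $(\bbZ/N\bbZ)^\times$ gives the value $1$ in the final clause. No gaps.
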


\bigskip

\section{The CSS Conjecture}

In this section, we deal with the CSS conjecture for the Cayley graph $G = \Cay(\bbZ/N\bbZ, E_A)$. Notice that if the outdegree of every vertex in finite directed graph is at least one, then the graph contains a cycle. Thus every finite directed acyclic graph contain at least one vertex with outdegree 0.  Nathanson and Sullivan used this to prove the following theorem and derived its consequence. Their proofs can be found in \cite{NS07}. We recall this work in


\begin{thm} 
\cite{NS07}\label{acyclic} 
Let $V = \left\{v_0, v_1, \dots ,v_{N-1} \right\}$ be  the vertex set of the directed graph $G$. 
Then $G$ is directed acyclic if and only if there is a permutation $\sigma$ of $\{0, 1, \dots, N-1 \}$ such that 
$r < s$ for every edge $(v_{\sigma(r)},v_{\sigma(s)})$ of the graph $G$.
\end{thm}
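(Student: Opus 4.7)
The plan is to prove the two directions separately. For the easy direction ($\Leftarrow$), suppose such a permutation $\sigma$ exists and assume toward a contradiction that $G$ contains a directed cycle $\{(v_{i_{j-1}}, v_{i_j})\}_{j=1}^{l}$ with $v_{i_0} = v_{i_l}$. Writing $i_j = \sigma(r_j)$, the hypothesis on $\sigma$ gives $r_0 < r_1 < \cdots < r_l$, while $\sigma(r_0) = \sigma(r_l)$ forces $r_0 = r_l$ because $\sigma$ is injective, a contradiction.

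For the nontrivial direction ($\Rightarrow$), I would induct on $N = |V|$. The base case $N = 1$ is immediate, taking $\sigma$ to be the identity. For the inductive step, invoke the observation recorded just before the theorem: since $G$ is a finite directed acyclic graph, it contains some vertex $v_j$ of outdegree $0$. Let $G'$ be the induced subgraph on $V \smallsetminus \{v_j\}$; it is still acyclic (any cycle in $G'$ is a cycle in $G$) and has $N-1$ vertices. After relabelling $V \smallsetminus \{v_j\} = \{v'_0, \dots, v'_{N-2}\}$, the inductive hypothesis produces a permutation $\tau$ of $\{0, \dots, N-2\}$ with $r < s$ for every edge $(v'_{\tau(r)}, v'_{\tau(s)})$ of $G'$. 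Lift $\tau$ to a permutation $\sigma$ of $\{0, \dots, N-1\}$ by letting $\sigma(r)$ be the original index of $v'_{\tau(r)}$ for $r < N-1$ and setting $\sigma(N-1) = j$. Every edge of $G$ is either (a) an edge of $G'$, already correctly ordered by $\tau$, or (b) of the form $(v_i, v_j)$ with target index $N-1$, since no edge leaves $v_j$; in case (b) the source index is some $r < N-1$, so $r < N-1$ as required.

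The argument is essentially the textbook topological-sort proof, so no step is conceptually difficult; the main thing to get right is the bookkeeping in the inductive step, namely making precise how the permutation $\tau$ on a reindexed copy of $V \smallsetminus \{v_j\}$ is pulled back to a permutation of the original index set $\{0, 1, \dots, N-1\}$ so that the ordering property transfers without ambiguity.
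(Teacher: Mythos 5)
Your proof is correct, and it follows exactly the route the paper indicates: the paper does not reproduce a proof (it cites \cite{NS07}), but the remark immediately preceding the theorem states that Nathanson and Sullivan's argument rests on the fact that every finite directed acyclic graph has a vertex of outdegree $0$, which is precisely the pivot of your inductive (topological-sort) step. Both directions, including the bookkeeping for lifting $\tau$ to $\sigma$, are handled soundly, so nothing needs to be changed.
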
 
 
\begin{cor} 
\cite{NS07}\label{bdheight} 
Let $G = (V,E)$ be a directed graph with vertex set $\{v_0, v_1, \dots, v_{N-1} \}$ and let $\Sigma \subseteq S_N$ be a set of permutations of $\{0, 1, \dots, N-1 \}$.
For $\sigma \in \Sigma$, let $B_{\sigma}$ be the set of edges $(v_{\sigma(r)},v_{\sigma(s)}) \in E$ with 
$r \ge s$. Then $\beta(G) \leq \min\left\{\left|B_{\sigma}\right| : \sigma \in \Sigma \right \}$.
\end{cor}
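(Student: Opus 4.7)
The plan is to derive this corollary as a direct consequence of Theorem \ref{acyclic}. The statement asks for an upper bound on $\beta(G)$, which is the minimum number of edges whose removal makes $G$ acyclic, so it suffices to exhibit, for each $\sigma \in \Sigma$, an edge set of size $|B_\sigma|$ whose removal yields a directed acyclic graph. The natural candidate is $B_\sigma$ itself.

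First I would fix an arbitrary $\sigma \in \Sigma$ and consider the subgraph $G' = (V, E \smallsetminus B_\sigma)$. By definition of $B_\sigma$, every remaining edge is of the form $(v_{\sigma(r)}, v_{\sigma(s)})$ with $r < s$. Applying the ``only if'' direction of Theorem \ref{acyclic} in reverse — really just the ``if'' direction — the permutation $\sigma$ witnesses that $G'$ is directed acyclic, since $r < s$ holds for every edge of $G'$ when indexed through $\sigma$.

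Therefore $B_\sigma$ is a feasible choice of edge set whose removal destroys all directed cycles in $G$, and by definition of $\beta(G)$ we obtain $\beta(G) \le |B_\sigma|$. Since $\sigma \in \Sigma$ was arbitrary, taking the minimum over $\Sigma$ gives $\beta(G) \le \min\{|B_\sigma| : \sigma \in \Sigma\}$, as claimed. There is no real obstacle here; the content of the corollary is essentially a repackaging of Theorem \ref{acyclic} into an optimization bound that can later be exploited by choosing $\Sigma$ cleverly (for instance, as a family of permutations induced by multiplication by units in $(\bbZ/N\bbZ)^\times$ in the Cayley-graph application).
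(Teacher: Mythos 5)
Your argument is correct and is exactly the intended one: removing $B_\sigma$ leaves only edges with $r<s$, so the ``if'' direction of Theorem \ref{acyclic} (with $\sigma$ as the witnessing permutation) shows $(V,E\smallsetminus B_\sigma)$ is acyclic, giving $\beta(G)\le|B_\sigma|$ and hence the minimum over $\Sigma$. The paper defers the proof to \cite{NS07}, where the same deduction is made, so there is nothing to add.
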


This corollary yields an immediate result on our Cayley graph $\Cay(\bbZ/N\bbZ,E_A)$, namely,

\begin{lemma} \label{cayley}
Let $N \geq 2$, $d \ge 1$ and $A = \left\{a_1,\dots,a_d\right\} \subseteq (\bbZ/N\bbZ)^*$. 
Let $G = \Cay(\bbZ/N\bbZ,E_A)$ be the Cayley graph constructed from $A$.
Let $\Sigma$ be a set of permutations of $\bbZ/N\bbZ$ and $\sigma \in \Sigma$.
For $i \in \bbZ/N\bbZ$ and $j \in \left\{1,\dots,d\right\}$, define $t_{ij} \in \bbZ/N\bbZ$ by 
$\sigma(i)+a_j = \sigma(t_{ij})$. Then 
\[
E_A = \left\{(\sigma(i),\sigma(t_{ij})) : i \in \bbZ/N\bbZ \; \; \text{and} \;\; j \in \left\{1,\dots,d\right\}\right\}.
\]
Let 
\[
B_{\sigma} = \left\{(\sigma(i),\sigma(t_{ij})) : (i \mod{N}) > (t_{ij} \mod{N}) \; \; \text{and} \;\; j \in \left\{1,\dots,d\right\} \right\}. 
\]
Then the graph $G' = (\bbZ/N\bbZ,E_A \smallsetminus B_{\sigma})$ is directed acyclic for every permutation $\sigma \in \Sigma$ and 
\[
\beta(G) \leq \min\left\{\left|B_{\sigma}\right| : \sigma \in \Sigma \right\}.
\] 
\end{lemma}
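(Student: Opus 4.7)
The plan is to verify the three assertions in order, using only the combinatorial description of Cayley graph edges together with Theorem \ref{acyclic} and Corollary \ref{bdheight}.

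First I would verify the rewriting of $E_A$. By definition every edge in $E_A$ has the form $(x, x+a_j)$ for some $x \in \bbZ/N\bbZ$ and some $j \in \{1,\dots,d\}$. Since $\sigma$ is a bijection of $\bbZ/N\bbZ$, there is a unique $i$ with $x = \sigma(i)$, and by the defining relation $\sigma(i)+a_j = \sigma(t_{ij})$ we get $(x,x+a_j) = (\sigma(i),\sigma(t_{ij}))$. Conversely, any pair $(\sigma(i),\sigma(t_{ij}))$ is of the form $(x,x+a_j)$ by the same relation, so it lies in $E_A$.

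Next I would show that $G' = (\bbZ/N\bbZ, E_A \smallsetminus B_\sigma)$ is directed acyclic by appealing to Theorem \ref{acyclic}. Identify the vertex set $\bbZ/N\bbZ$ with $\{0,1,\dots,N-1\}$ via the standard least-nonnegative-residue map $k \mapsto (k \bmod N)$, and use $\sigma$ itself as the ordering permutation. Every edge of $G'$ has the form $(\sigma(i),\sigma(t_{ij}))$ with $(i \bmod N) \le (t_{ij} \bmod N)$ (this is precisely the complement of the condition defining $B_\sigma$). Because $a_j \neq 0$ in $\bbZ/N\bbZ$ and $\sigma$ is injective, $\sigma(i) \neq \sigma(t_{ij})$, hence $i \neq t_{ij}$ and the inequality is strict. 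This is exactly the condition in Theorem \ref{acyclic}, so $G'$ is acyclic.

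Finally, the desired inequality is immediate: applying Corollary \ref{bdheight} (or directly the definition of $\beta(G)$) to the acyclic graph $G'$ produced above gives $\beta(G) \le |B_\sigma|$ for each $\sigma \in \Sigma$, and taking the minimum yields the stated bound. I expect no serious obstacle here; this is essentially a bookkeeping lemma translating Corollary \ref{bdheight} into the Cayley setting. The only subtle point is making sure the edge endpoints are genuinely distinct so that the weak inequality $(i \bmod N) \le (t_{ij} \bmod N)$ can be promoted to the strict inequality required by Theorem \ref{acyclic}, which is exactly where the hypothesis $A \subseteq (\bbZ/N\bbZ)^*$ enters.
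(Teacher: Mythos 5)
Your proof is correct and follows exactly the route the paper intends: the paper states this lemma without proof as an ``immediate result'' of Corollary~\ref{bdheight}, and your argument simply fills in the bookkeeping (bijectivity of $\sigma$ for the rewriting of $E_A$, Theorem~\ref{acyclic} with $\sigma$ as the ordering for acyclicity of $G'$, and the minimization over $\Sigma$). Your observation that $a_j \neq 0$ makes the endpoints distinct, so the weak inequality becomes strict and the strict-inequality definition of $B_\sigma$ here matches the $r \ge s$ convention of Corollary~\ref{bdheight}, is precisely the only point needing care.
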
 

For $k \in (\bbZ/N\bbZ)^\times$, define the permutation $\sigma_k$ of $\bbZ/N\bbZ$ by $\sigma_k(i) = ki$ for all $i \in \bbZ/N\bbZ$. Consider the set $\Sigma = \left\{\sigma_k : k \in (\bbZ/N\bbZ)^\times\right\}$  of $\phi(N)$ permutations of $\bbZ/N\bbZ$. Fix $k \in (\bbZ/N\bbZ)^\times$. 
For $i \in \bbZ/N\bbZ$ and $j \in \left\{1,\dots,d\right\}$, define
$t_{ij} \in \bbZ/N\bbZ \smallsetminus \{ i \}$ by $\sigma_k(t_{ij}) = \sigma_k(i) + a_j$.
Since $k \in (\bbZ/N\bbZ)^\times$, there exists $u_k \in (\bbZ/N\bbZ)^\times$ such that 
$ku_k = 1$.
Let $r_j = (u_ka_j \mod{N})$. Then $1 \leq r_j \leq N-1$ and $a_j = kr_j$. Thus
\[
\sigma_k(t_{ij}) = \sigma_k(i)+a_j = ki + kr_j = k(i + r_j) = \sigma_k(i+r_j),
\]
so  $t_{ij} = i+r_j$. 
Since $1 \leq r_j \leq N-1$, $(t_{ij}\mod N) = (i~\mod N)+r_j-N < (i~\mod N)$ if $(i~\mod N)+r_j \geq N$. 
Moreover, if $(i~\mod N)+r_j < N$, then $(t_{ij}\mod N) = (i~\mod N)+r_j > (i~\mod N)$. 
Hence $(i~\mod N) > (t_{ij}\mod N) \Leftrightarrow N-r_j \leq (i~\mod N)$. 

Let $B_{\sigma_k} = \left\{(\sigma_k(i),\sigma_k(t_{ij})) : (i \mod N) > (t_{ij} \mod N) 
\; \; \text{and} \;\; j \in \left\{1,\dots,d\right\} \right\}$.
Then
\[
\left|B_{\sigma_k}\right| 
= \left|\left\{(\sigma_k(i),\sigma_k(t_{ij})) : N-r_j \leq (i \mod N) \leq N-1\right\}\right| 
= \sum_{j=1}^{d}r_j = \sum_{j=1}^{d}(u_ka_j \mod{N}).
\]
Applying Lemma \ref{cayley} and the fact that $\left\{u_k : k \in (\bbZ/N\bbZ)^\times\right\} = (\bbZ/N\bbZ)^\times$, we get
\begin{align*}
\beta(G) &\leq \min\left\{\left|B_{\sigma_k}\right| : k \in (\bbZ/N\bbZ)^\times\right\}\\
&= \min\left\{\sum_{j=1}^{d}(u_ka_j \mod{N}) : k \in (\bbZ/N\bbZ)^\times\right\}\\
&= \min\left\{\sum_{j=1}^{d}(ka_j \mod{N}) : k \in (\bbZ/N\bbZ)^\times\right\}\\
&= h_N(\left\langle a_1,\dots,a_d\right\rangle).
\end{align*}
Thus $\beta(G) \le h_N(\left\langle a_1,\dots,a_d\right\rangle)$. Together with Lemma \ref{ubhn}, we have

\begin{lemma} \label{betahn}
Let $N \geq 2$, $d \ge 1$ and $A = \left\{a_1,\dots,a_d\right\} \subseteq (\bbZ/N\bbZ)^*$. 
Let $G = \Cay(\bbZ/N\bbZ,E_A)$ be the Cayley graph constructed from $A$.
Then $$\beta(G) \leq h_N(\left\langle a_1,\dots,a_d\right\rangle) \leq \frac{dN}{2}.$$
\end{lemma}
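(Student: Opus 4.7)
The plan is to apply Lemma \ref{cayley} to the specific family of permutations coming from multiplication by units. Concretely, for each $k \in (\bbZ/N\bbZ)^\times$ let $\sigma_k(i) = ki$, and take $\Sigma = \left\{\sigma_k : k \in (\bbZ/N\bbZ)^\times\right\}$, a set of $\phi(N)$ permutations of $\bbZ/N\bbZ$. The strategy is to evaluate $|B_{\sigma_k}|$ for each $k$ in closed form, check that taking the minimum recovers the definition of $h_N$, and then invoke Lemma \ref{ubhn} for the second inequality.

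To compute $|B_{\sigma_k}|$, fix $k$ with inverse $u_k$. Since $\sigma_k(i) + a_j = k(i + u_k a_j) = \sigma_k(i + u_k a_j)$, the indices satisfy $t_{ij} = i + u_k a_j$. Setting $r_j = (u_k a_j \mod N)$, one has $1 \leq r_j \leq N-1$ because $a_j \neq 0$ and $u_k$ is a unit. A short case analysis on whether $(i \mod N) + r_j$ exceeds $N$ shows that the edge $(\sigma_k(i), \sigma_k(t_{ij}))$ lies in $B_{\sigma_k}$ exactly when $(i \mod N) \geq N - r_j$, giving precisely $r_j$ contributing indices per $j$. Summing yields
\[
|B_{\sigma_k}| = \sum_{j=1}^d r_j = \sum_{j=1}^d (u_k a_j \mod N).
\]

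Now, as $k$ ranges over $(\bbZ/N\bbZ)^\times$, so does $u_k$, so Lemma \ref{cayley} gives
\[
\beta(G) \leq \min_{k \in (\bbZ/N\bbZ)^\times} |B_{\sigma_k}| = \min_{k \in (\bbZ/N\bbZ)^\times} \sum_{j=1}^d (k a_j \mod N) = h_N(\langle a_1, \dots, a_d\rangle),
\]
which is the first inequality. For the second, note that every $a_j$ is nonzero, so $d^*(\langle a_1, \dots, a_d\rangle) = d$, and Lemma \ref{ubhn} then yields $h_N(\langle a_1, \dots, a_d\rangle) \leq \lfloor dN/2 \rfloor \leq dN/2$.

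The only step that requires genuine care is the case analysis identifying $B_{\sigma_k}$ with the set $\{i : (i \mod N) \geq N - r_j\}$; once $t_{ij} = i + u_k a_j$ is established, the rest is bookkeeping. The key conceptual point is simply that the unit-multiplication permutations translate the combinatorial minimization defining $\beta(G)$ into the arithmetic minimization defining $h_N$.
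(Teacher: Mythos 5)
Your proposal is correct and follows essentially the same route as the paper: the unit-multiplication permutations $\sigma_k$, the computation $t_{ij}=i+r_j$ with $r_j=(u_k a_j \bmod N)$ giving $|B_{\sigma_k}|=\sum_j (u_k a_j \bmod N)$, the observation that $u_k$ ranges over all units so the minimum is $h_N(\langle a_1,\dots,a_d\rangle)$, and Lemma \ref{ubhn} for the bound $dN/2$. No gaps.
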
 

This lemma gives

\begin{thm} \label{css}
Let $N \geq 5, d \geq 1$ and $A = \left\{a_1,\dots,a_d\right\} \subseteq (\bbZ/N\bbZ)^*$. 
Let $G = \Cay(\bbZ/N\bbZ,E_A)$ be the Cayley graph constructed from $A$ which has no digons.
If $d \leq \dfrac{N-1}{4}$, then $\beta(G) \leq \dfrac{\gamma(G)}{2}$.
\end{thm}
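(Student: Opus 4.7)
The plan is to observe that this theorem is essentially a direct algebraic consequence of Lemma \ref{betahn} combined with the formula (\ref{gammaG}) for $\gamma(G)$, so the proof should be short and the hypothesis $d \leq (N-1)/4$ should emerge precisely from reconciling the two bounds.

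First I would pin down $\gamma(G)$. The formula (\ref{gammaG}) was derived in the introduction under the assumption that $G$ is triangle-free, but a quick inspection of that derivation shows that only the no-digons hypothesis is actually needed: each element of $A$ contributes $N$ directed edges giving a total of $dN$ directed edges, and when $G$ has no digons these correspond bijectively to $dN$ undirected edges. Hence under the hypothesis of the theorem we still have
\[
\gamma(G) = \binom{N}{2} - dN = \frac{N(N-1-2d)}{2}.
\]

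Next I would invoke Lemma \ref{betahn}, which gives $\beta(G) \leq dN/2$ directly. The target inequality is $\beta(G) \leq \gamma(G)/2 = N(N-1-2d)/4$, so it suffices to show
\[
\frac{dN}{2} \leq \frac{N(N-1-2d)}{4}.
\]
Clearing denominators and simplifying, this is equivalent to $4d \leq N-1$, which is exactly the hypothesis $d \leq (N-1)/4$. Chaining the two inequalities finishes the proof.

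The only remaining thing to comment on is the role of the standing assumption $N \geq 5$: since we require $d \geq 1$, the hypothesis $d \leq (N-1)/4$ forces $N \geq 5$, so the condition $N \geq 5$ is exactly what keeps the statement non-vacuous. There is no real obstacle to overcome here; the work was done in establishing Lemma \ref{ubhn} (and hence Lemma \ref{betahn}), and this theorem is the payoff that packages the height bound into the CSS-shaped inequality.
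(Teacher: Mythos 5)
Your proposal is correct and follows essentially the same route as the paper: apply the bound $\beta(G) \leq dN/2$ from Lemma \ref{betahn}, note that $d \leq (N-1)/4$ is equivalent to $dN/2 \leq N(N-1-2d)/4 = \gamma(G)/2$ by Eq.~\eqref{gammaG}, and chain the inequalities. Your added remarks — that the edge count behind \eqref{gammaG} really only needs the no-digons (and no-loops) hypothesis, and that $N \geq 5$ is forced by $d \geq 1$ and $d \leq (N-1)/4$ — are correct refinements the paper leaves implicit.
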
 
\begin{proof}
Assume that $d \leq \dfrac{N-1}{4}$. Then 
\[
\dfrac{dN}{2} = dN-\dfrac{dN}{2} \leq \dfrac{N(N-1)}{4}-\dfrac{dN}{2} = \dfrac{N(N-1-2d)}{4}.
\]
By Lemma \ref{betahn} and Eq. \eqref{gammaG}, we get $\beta(G) \leq \dfrac{dN}{2} \leq \dfrac{N(N-1-2d)}{4} = \dfrac{\gamma(G)}{2}$.
\end{proof}

Hamidoune proved the Caccetta-H\"{a}ggkvist conjecture for Cayley graphs:

\begin{thm} \label{Hami}
{\rm \cite{H81,N06}} Let $A \subseteq (\bbZ/N\bbZ)^*$ and $d = \left|A\right| \geq \dfrac{N}{k}$.
Then the Cayley graph $G = \Cay(\bbZ/N\bbZ,E_A)$ contains a cycle of length at most $k$.
In particular, if $G$ is triangle-free, then $d < \dfrac{N}{3}$, that is, $3d+1 \leq N$. 
\end{thm}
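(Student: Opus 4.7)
The plan is to recast the problem as an additive combinatorics question about sumsets in $\bbZ/N\bbZ$ and then apply Kneser's theorem. A directed cycle of length $\ell$ based at the vertex $0$ in $G = \Cay(\bbZ/N\bbZ, E_A)$ is, by definition, a tuple $(a_{i_1},\dots,a_{i_\ell}) \in A^\ell$ with $a_{i_1} + \cdots + a_{i_\ell} = 0$; equivalently, $G$ contains a directed cycle of length at most $k$ if and only if $0 \in jA$ for some $1 \leq j \leq k$, where $jA$ denotes the $j$-fold sumset. By vertex-transitivity of the Cayley graph, it suffices to detect such a cycle based at $0$.

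Suppose for contradiction that $0 \notin jA$ for every $j \in \{1,\dots,k\}$. Put $A' = A \cup \{0\}$ (so $|A'| = d+1$) and let $T = (k-1)A'$; a sum of $k-1$ elements of $A'$ uses $j$ entries from $A$ and $k-1-j$ zeros, so $T = \{0\} \cup A \cup 2A \cup \cdots \cup (k-1)A$. The sets $T$ and $-A$ are then disjoint: if $x \in jA$ and $-x \in A$ for some $1 \le j \le k-1$, then $0 \in (j+1)A$ with $j+1 \leq k$, contradicting the assumption (and $j=0$ would force $0 \in A$). Hence $|T| \leq N - d$.

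For the lower bound I would invoke Kneser's theorem in its iterated form applied to $T = (k-1)A'$:
\[
|T| \;\geq\; (k-1)\,|A'+H| \,-\,(k-2)\,|H|,
\]
where $H \leq \bbZ/N\bbZ$ is the stabilizer of $T$. The main obstacle is to handle a potentially nontrivial $H$ in the general composite case; for prime $N$ one automatically has $|H| = 1$ and the argument closes at once. The key observation that resolves this is that $0 \in T$ together with $H$-invariance gives $H \subseteq T$, so $T \cap (-A) = \emptyset$ combined with $-H = H$ forces $A \cap H = \emptyset$. Consequently $A' + H = H \sqcup (A + H)$ is a disjoint union, yielding $|A' + H| \geq |H| + d$. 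Substituting into the Kneser bound gives $|T| \geq (k-1)d + |H|$, and comparing with $|T| \leq N - d$ produces $kd + |H| \leq N$, so $kd \leq N - 1$, contradicting $kd \geq N$. Part (ii) is immediate by specializing to $k=3$: a triangle-free $G$ (no directed cycle of length $\leq 3$) must have $d < N/3$, and integrality of $d$ then gives $3d + 1 \leq N$.
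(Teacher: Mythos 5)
Your proof is correct, but note that the paper offers no proof of Theorem~\ref{Hami} to compare against: the result is imported from Hamidoune \cite{H81} and Nathanson \cite{N06}. What you have written is essentially a reconstruction of the additive-number-theory route of \cite{N06}: translate ``no directed cycle of length $\le k$'' into $0 \notin jA$ for $1 \le j \le k$ (this is exactly Lemma~\ref{cycle}, and is where vertex-transitivity enters), adjoin $0$ to form $A'$, bound $\left|(k-1)A'\right|$ above by $N-d$ via disjointness from $-A$, and below by the iterated form of Kneser's theorem. The delicate point for composite $N$ --- a possibly nontrivial stabilizer $H$ --- is handled correctly: since $0$ lies in the sumset $T=(k-1)A'$, $H \subseteq T$, hence $H \cap (-A) = \emptyset$ and, as $-H=H$, also $A \cap H = \emptyset$, giving $|A'+H| \ge |H|+d$ and then $kd + |H| \le N$, contradicting $kd \ge N$. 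This is genuinely different from Hamidoune's original argument in \cite{H81}, which goes through the connectivity theory of vertex-transitive graphs (atoms and fragments) rather than sumset estimates; your route buys a short, uniform proof for all $N$ at the price of invoking Kneser's theorem as a black box. Two minor points worth stating explicitly: the case $k=1$ is vacuous because $A \subseteq (\bbZ/N\bbZ)^*$ forces $d \le N-1 < N$, and the Kneser step needs $k \ge 2$ so that $T$ has at least one summand; also, with the paper's definition of a cycle as a closed walk the equivalence with $0 \in jA$ is immediate, and even under the stricter ``distinct vertices'' convention a closed walk of length $j \le k$ with nonzero steps contains a directed cycle of length at most $k$, so nothing is lost.
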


From Theorem \ref{css}, if $N \geq 4d+1$, then $\beta(G) \leq \dfrac{\gamma(G)}{2}$.
Combined with Theorem \ref{Hami}, in order to prove the CSS conjecture for the triangle-free Cayley graph $G$ with $A$ of size $d$,
it suffices to consider only when $3d+1 \leq N \leq 4d$.

For $A \subseteq \bbZ/N\bbZ$ and $l$ is a positive integer, we define 
\[
lA = \underbrace{A + A + \dots + A}_\text{$l$ copies} = \left\{a_1+a_2+\dots+a_l : a_i \in A \;\text{for}\; i = 1,2,\dots,l\right\}.
\]
We have a criterion for determining whether our Cayley graph $\Cay(\bbZ/N\bbZ,E_A)$ is triangle-free in the next lemma.

\begin{lemma} 
\cite{N06}\label{cycle} 
Let $A \subseteq \bbZ/N\bbZ$ and $G = \Cay(\bbZ/N\bbZ,E_A)$ be the Cayley graph constructed from $A$. 
Then $G$ contains a directed cycle of length $l$ if and only if $0 \in lA$.
In particular, $G$ is a triangle-free digraph if and only if $0 \notin A, 0 \notin 2A$ and $0 \notin 3A$.
\end{lemma}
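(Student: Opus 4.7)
The plan is to unwind the definitions and translate a directed cycle of length $l$ in $\Cay(\bbZ/N\bbZ,E_A)$ into a telescoping sum in $\bbZ/N\bbZ$. The key observation is that an edge $(u,v)$ of the Cayley graph is nothing but a pair with $v-u\in A$, so a closed walk of length $l$ corresponds exactly to an ordered $l$-tuple in $A$ whose entries sum to $0$.

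For the forward direction, suppose $G$ contains a directed cycle $\{(v_{i-1},v_i)\}_{i=1}^l$ with $v_0=v_l$. Since each $(v_{i-1},v_i)$ lies in $E_A$, there exists $a_i\in A$ with $v_i=v_{i-1}+a_i$. Summing over $i$ gives
\[
a_1+a_2+\cdots+a_l=\sum_{i=1}^l(v_i-v_{i-1})=v_l-v_0=0,
\]
so $0\in lA$. For the converse, suppose $0=a_1+a_2+\cdots+a_l$ with $a_i\in A$. Pick any starting vertex (for instance $v_0=0$) and define $v_i=v_{i-1}+a_i$ for $i=1,\dots,l$. By construction every $(v_{i-1},v_i)$ belongs to $E_A$, and $v_l=v_0+\sum_{i=1}^l a_i=v_0$, producing a directed cycle of length $l$.

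The ``in particular'' statement follows immediately from the $l=1,2,3$ cases together with the definition of triangle-free: $G$ has no loops iff $0\notin A$, no digons iff $0\notin 2A$, and no triangles iff $0\notin 3A$. Neither direction presents any real obstacle; the argument is entirely a matter of recording the definition of an edge in a Cayley graph and reading off the cycle condition as a sumset membership.
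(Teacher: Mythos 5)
Your proof is correct: the telescoping identification of a length-$l$ closed walk with an ordered $l$-tuple of elements of $A$ summing to $0$ is exactly the standard argument (note the paper's definition of a directed cycle requires only $v_0=v_l$, so your constructed walk qualifies without any distinctness check). The paper itself gives no proof, citing \cite{N06}, and your argument is essentially the one found there, so nothing further is needed.
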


This lemma allows us to give results for some small $d$ as follows. 

\begin{thm}
Let $N \ge 4$ and $A \subseteq (\bbZ/N\bbZ)^*$. Then the CSS conjecture holds for the Cayley graphs $G = \Cay(\bbZ/N\bbZ,E_A)$ when $|A| = 1, 2$ or $3$. That is,  \\
{\rm(i)} $A = \left\{a_1\right\}$, \qquad
{\rm(ii)} $A = \left\{a_1,a_2\right\}$, \qquad
{\rm(iii)} $A = \left\{a_1,a_2,a_3\right\}$.
\end{thm}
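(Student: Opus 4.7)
The plan is to combine Theorem \ref{css} with Hamidoune's bound in Theorem \ref{Hami} to reduce the problem to a finite case analysis. If $G$ is triangle-free then $3d+1 \le N$ by Theorem \ref{Hami}, while Theorem \ref{css} already handles $N \ge 4d+1$. Hence only the range $3d+1 \le N \le 4d$ remains; for $d \in \{1, 2, 3\}$ this leaves the pairs $(d, N) \in \{(1,4),(2, 7),(2, 8),(3, 10),(3, 11),(3, 12)\}$. For the smaller values of $N$ still permitted by the hypothesis $N \ge 4$, no triangle-free Cayley graph with the prescribed $|A|$ exists (again by Theorem \ref{Hami}), so the conjecture is vacuous there. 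By Lemma \ref{betahn}, in each remaining case it suffices to verify $h_N(\langle a_1, \ldots, a_d\rangle) \le \gamma(G)/2 = N(N-1-2d)/4$, and since $\beta(G)$ is an integer the floor of this bound is enough.

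For $(d, N) = (1, 4)$ the argument is immediate: the digon condition $0 \notin 2A$ forces $a_1 \ne 2$, so $a_1 \in (\bbZ/4\bbZ)^\times$, and Corollary \ref{ubhnc}(ii) gives $h_4(\langle a_1\rangle) = 1 = \gamma(G)/2$. For the remaining five cases I would enumerate the triangle-free admissible $A$ modulo the $\sim$-equivalence: since the height is invariant under scaling by a unit, I can normalize within each equivalence class (for instance, rescaling so that a chosen unit entry of $A$ equals $1$), and then use Lemma \ref{cycle} to rule out those $A$ with $0 \in 2A$ or $0 \in 3A$. For each surviving $A$ I would tabulate $(ka_i \mod N)_{i=1}^{d}$ as $k$ ranges over $(\bbZ/N\bbZ)^\times$ and check that the smallest coordinate sum meets the required bound.

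The main obstacle will be the subcases $(d, N) = (3, 10)$ and $(3, 12)$, where $(\bbZ/N\bbZ)^\times \subsetneq (\bbZ/N\bbZ)^*$, so one cannot always normalize a unit entry of $A$ to $1$. There I plan to stratify by the intersection $A \cap (\bbZ/N\bbZ)^\times$ and by the common divisors $\gcd(a_i, N)$; in each stratum the triangle-free constraints from Lemma \ref{cycle} sharply prune the list of admissible $A$, leaving a short explicit height computation. The case $(d, N) = (3, 11)$ is in fact already covered by the prime-modulus theorem of Nathanson and Sullivan in \cite{NS07}, although the same direct enumeration disposes of it as well.
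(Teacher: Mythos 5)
Your proposal is correct and follows essentially the same route as the paper: reduce via Theorem \ref{css} and Theorem \ref{Hami} to the finitely many remaining cases $3d+1 \le N \le 4d$ (so $(d,N)\in\{(1,4),(2,7),(2,8),(3,10),(3,11),(3,12)\}$), handle $(1,4)$ exactly as the paper does via Corollary \ref{ubhnc}(ii), and for the rest bound $\beta(G)$ by the height (Lemma \ref{betahn}) and enumerate the triangle-free classes using Lemma \ref{cycle}. The only difference is that the paper actually carries out the explicit height tables for those five cases, which your outline leaves as a deferred computation.
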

\begin{proof}
(i) It suffices to consider only for $N = 4$. 
If $a_1 = 2$, then $0 \in 2A$, so by Lemma \ref{cycle}, $G$ has a digon, a contradiction.
Now assume $a_1 = 1$ or $3$. Then $a_1 \in (\bbZ/4\bbZ)^\times$.
By Corollary \ref{ubhnc} (ii), $h_4(\left\langle a_1 \right\rangle) = 1$, 
so $\beta(G) \leq h_4(\left\langle a_1 \right\rangle) = 1 = \dfrac{\gamma(G)}{2}$. \\ 

\noindent (ii) It suffices to consider only when $N = 7$ or $8$. The following table displays the heights $h_N(\la a_1, a_2 \ra)$ for $N = 7$ and $N=8$.
\[
\begin{tabular}{|c|c|c||c|c|c||c|c|c|}
\hline
$N$ & $\left\langle a_1,a_2\right\rangle$ & $h_N(\left\langle a_1,a_2\right\rangle)$ &
$N$ & $\left\langle a_1,a_2\right\rangle$ & $h_N(\left\langle a_1,a_2\right\rangle)$ &
$N$ & $\left\langle a_1,a_2\right\rangle$ & $h_N(\left\langle a_1,a_2\right\rangle)$ \\\hline
$7$ & $\left\langle 1,2\right\rangle^*$ & $3$ &
$8$ & $\left\langle 1,2\right\rangle^*$ & $3$ &
$8$ & $\left\langle 2,3\right\rangle$ & $5$ \\\hline
 & $\left\langle 1,3\right\rangle$ & $4$ &
 & $\left\langle 1,3\right\rangle^*$ & $4$ &
 & $\left\langle 2,4\right\rangle$ & $6$ \\\hline
 & $\left\langle 1,4\right\rangle^*$ & $3$ &
 & $\left\langle 1,4\right\rangle$ & $5$ &
 & $\left\langle 2,5\right\rangle^*$ & $3$ \\\hline
 & $\left\langle 1,5\right\rangle$ & $4$ &
 & $\left\langle 1,5\right\rangle^*$ & $6$ &
 & $\left\langle 2,6\right\rangle$ & $8$ \\\hline
 & $\left\langle 1,6\right\rangle$ & $7$ &
 & $\left\langle 1,6\right\rangle$ & $5$ &
 & $\left\langle 4,5\right\rangle$ & $5$ \\\hline
 &  &  &
 & $\left\langle 1,7\right\rangle$ & $8$ &
 & $\left\langle 4,6\right\rangle$ & $6$ \\\hline
\end{tabular}
\]
Here $\left\langle a_1,a_2\right\rangle^*$ means that $G = \Cay(\bbZ/N\bbZ,E_A)$
with $A = \left\{a_1,a_2\right\}$ is triangle-free.
Otherwise the graph $G$ is not triangle-free. For instance, 
when $N = 8$ and $A = \left\{1,6\right\}$. Since $0 \in 3A = \left\{0,2,3,5\right\}$, $G = \Cay(\bbZ/N\bbZ,E_A)$ contains a cycle of length $3$ by Lemma \ref{cycle}.\\
If $N = 7$, then $\beta(G) \leq h_7(\left\langle a_1,a_2\right\rangle^*) = 3 < \dfrac{7(7-1-4)}{4} = \dfrac{\gamma(G)}{2}$.\\
If $N = 8$, then $\beta(G) \leq h_8(\left\langle a_1,a_2\right\rangle^*) \leq 6 = \dfrac{8(8-1-4)}{4} = \dfrac{\gamma(G)}{2}$. \\ 

\noindent (iii) It suffices to verify this conjecture only when $10 \leq N \leq 12$.
A similar computation as in the previous case reduces the table of $h_N(\left\langle a_1,a_2,a_3\right\rangle)$ for $10 \leq N \leq 12$ with $0<(a_1\mod{N})<(a_2\mod{N})<(a_3\mod{N})$ and $G = \Cay(\bbZ/N\bbZ,E_{\{a_1,a_2,a_3\}})$ is triangle-free as follows:
\[
\begin{tabular}{|c|c|c||c|c|c|}
\hline
$N$ & $\left\langle a_1,a_2,a_3\right\rangle$ & $h_N(\left\langle a_1,a_2,a_3\right\rangle)$ & $N$ & $\left\langle a_1,a_2,a_3\right\rangle$ & $h_N(\left\langle a_1,a_2,a_3\right\rangle)$ \\\hline
$10$ & $\left\langle 1,2,3\right\rangle$ & $6$ & $12$ & $\left\langle 1,2,3\right\rangle$ & $6$
\\\hline
 & $\left\langle 1,4,7\right\rangle$ & $6$ & & $\left\langle 1,2,7\right\rangle$ & $10$  
\\\hline
$11$ & $\left\langle 1,2,3\right\rangle$ & $6$ & & $\left\langle 1,3,5\right\rangle$ & $9$ 
\\\hline
 & $\left\langle 1,2,4\right\rangle$ & $7$ & & $\left\langle 1,3,7\right\rangle$ & $11$ 
\\\hline
 & $\left\langle 1,2,6\right\rangle$ & $7$ & & $\left\langle 1,5,9\right\rangle$ & $15$ 
\\\hline
 & $\left\langle 1,3,6\right\rangle$ & $7$ & & $\left\langle 1,7,9\right\rangle$ & $11$ 
\\\hline
 & $\left\langle 1,4,8\right\rangle$ & $6$ & & &
\\\hline
 & $\left\langle 1,6,7\right\rangle$ & $6$ & & &
\\\hline
\end{tabular}
\]
If $N = 10$, then $\beta(G) \leq h_{10}(\left\langle a_1,a_2,a_3\right\rangle) = 6 < \dfrac{10(10-1-6)}{4} = \dfrac{\gamma(G)}{2}$.\\
If $N = 11$, then $\beta(G) \leq h_{11}(\left\langle a_1,a_2,a_3\right\rangle) \leq 7 < \dfrac{11(11-1-6)}{4} = \dfrac{\gamma(G)}{2}$.\\
If $N = 12$, then $\beta(G) \leq h_{12}(\left\langle a_1,a_2,a_3\right\rangle) \leq 15 = \dfrac{12(12-1-6)}{4} = \dfrac{\gamma(G)}{2}$.
\end{proof}

\begin{remark}
For $d>3$, the following example shows that sometimes the height is greater than $\dfrac{\gamma(G)}{2}$,
so we cannot conclude the CSS conjecture without computing $\beta(G)$ explicitly.
\end{remark}

\begin{ex}
Let $N = 14$ and $A = \left\{1,2,8,9\right\} \subset (\bbZ/14\bbZ)^*$.
Since $0$ is not in $A, 2A$ and $3A$, $G = \Cay(\bbZ/14\bbZ,E_A)$ is a triangle-free digraph by Lemma \ref{cycle}.
We have $h_{14} (\left\langle 1,2,8,9\right\rangle) = 20$ and $\gamma(G) = 35$.
Thus $h_{14} (\left\langle 1,2,8,9\right\rangle) > \dfrac{\gamma(G)}{2}$.
\end{ex}

\end{document}